\newtheorem{theorem}{Theorem}[section]
\newtheorem{corollary}[theorem]{Corollary}
\newtheorem{conjecture}[theorem]{Conjecture}
\newtheorem{proposition}[theorem]{Proposition}
\newtheorem{remark}[theorem]{Remark}
\theoremstyle{definition}
\begin{document}

\title{An asymptotically tight upper bound for the domination number of the $2$-token graph of path graphs}

\author{E. Acosta Troncoso$^{*}$ \and J.~Lea\~nos\footnote{Unidad Acad\'emica de Matem\'aticas
Universidad Aut\'onoma de Zacatecas, Calzada Solidaridad y Paseo La Bufa
Col. Hidr\'aulica, C.P. 98060, Zacatecas, Zac.} \thanks{E-mail address: jleanos@uaz.edu.mx}
\and L.~M.~Rivera$^{*}$ \thanks{E-mail address: luismanuel.rivera@gmail.com}
}

\date{}
\maketitle

\begin{abstract}
 In this note, we show that the domination number of the $2$-token graph of the path graph of order $n\geq 2$ 
is equal to $\frac{n^2}{10}+\Theta(n)$.
\end{abstract}

{\it Keywords:}  Token graphs; domination number.\\
{\it AMS Subject Classification Numbers:}    05C69; 05C76.

\section{Introduction}
Let $G=(V(G), E(G))$ be a simple graph of order at least $n\geq 2$. The {\em double vertex graph}  of $G$ is defined as the graph $F_2(G)$ whose vertex set consists of all $2$-subsets of $V(G)$, where two vertices are adjacent in $F_2(G)$ iff their symmetric difference is an edge of $G$. This concept, along with its generalization known as the {\em $k$-token graph} of $G$, has been independently redefined multiple times under various names. In 1989, the double vertex graph of $G$ was defined and extensively studied by Alavi et al.~\cite{alavi2, alavi3}; however, $F_2(G)$ can also be traced back to the PhD thesis of Johns~\cite{johns}, where $F_2(G)$ was called the {\em $2$-subgraph graph} of $G$.  In 2002, Rudolph~\cite{rudolph} redefined $F_2(G)$ as the {\em symmetric square power} of $G$ and employed it to address the graph isomorphism problem and various issues in quantum mechanics, which motivated numerous works by different authors (see, for example, \cite{alzaga, aude, barghi, fisch}). In 2012,  Fabila-Monroy et al.~\cite{FFHH} reintroduced this concept under the name of {\em $k$-token graph} of $G$, where $k\in \{1,2,\ldots, n\}$ and $F_k(G)$ is defined as the graph  whose vertex set consists of all $k$-subsets of $V(G)$, where two vertices are adjacent in $F_k(G)$ iff their symmetric difference is an edge of $G$. In~\cite{FFHH} studied several combinatorial invariants of the these graphs, such as connectivity, diameter, clique number, chromatic number, Hamiltonicity, etc. Following this approach, a wealth of results regarding various combinatorial parameters of token graphs has been published; see, for instance, \cite{dealba, alba,  token2, dalfo, FGGHT, gomez, leatrujillo}.

We recall that a {\em dominating set} of $G$ is a subset $S$ of vertices from $V(G)$ such that every vertex 
$v\in V(G)\setminus S$ is a neighbor of at least one vertex of $S$. The {\em domination number} $\gamma(G)$ of $G$ is defined as:
\[
\gamma(G):={\rm min}\{ |D|:\; D\; \text{ is a dominating set of G} \}.
\]

The study of dominating sets of graphs is a long-standing and highly active area of research in graph theory. Notably, various chess problems related to these sets were posed in India as early as the 17th century. One of the best-known examples is {\em The $n$-Queens problem}~\cite{vardi}, which seeks to determine the minimum number of queens required to dominate an $n \times n$ chessboard. In 1979, Garey and Johnson \cite{garey} proved that determining  the domination number of a graph is an NP-hard problem.

The aim of this note is to establish an asymptotically tight upper bound for $\gamma(F_2(P_n))$, where $P_n$ denotes the path graph on $n \geq 2$ vertices. Recent related research includes the work of Goncalves et al.~\cite{goncalves}, who proved Chang's conjecture~\cite{chang} concerning the domination number of the mesh graph. Additionally, in~\cite{prabhu}, general bounds for $\gamma(F_2(G))$ were presented. More recently, Fabila-Monroy and G\'omez-Galicia~\cite{sergio} determined the exact value of $\gamma(F_2(G))$ for 
$G \in \{K_n, K_{1,n-1}\}$. 


\section{Main result}
Our aim in this note is to show the following result.
\begin{theorem}\label{teoprincipal}
If $n\geq 13$, then $a(n-1)\leq \gamma(F_2(P_n))\leq d(n)$, where
\[
a(n):=\begin{cases}
 \frac{1}{10}\left(n^2+n+20\right)  & n \equiv 0 \pmod 5\; \textup{or } n \equiv 4 \pmod 5,\\ 
             \frac{1}{10}\left(n^2+n+18\right)  & n\equiv 1 \pmod 5\; \textup{or } n \equiv 3 \pmod 5,\\
             \frac{1}{10}\left(n^2+n+14\right)  & n \equiv 2 \pmod  5,
\end{cases}
\]
and
\[
d(n):=\begin{cases}
 \frac{1}{10}(n^2+5n-30) & n \equiv 0 \pmod 5 \\
  \frac{1}{10}(n^2+5n-36)  & n \equiv 1 \pmod 5 \; \textup{or } n \equiv 4 \pmod 5\\
\frac{1}{10}(n^2+5n-34) & n \equiv 2\pmod 5  \;  \textup{or } n \equiv 3 \pmod 5.
\end{cases}
\]
\end{theorem}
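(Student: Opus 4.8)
The plan is to recast the problem as lattice-point domination and then treat the two inequalities separately: the lower bound by a refined volume count, and the upper bound (the technical heart, as the title suggests) by an explicit diagonal perfect-code construction together with boundary corrections.

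First I would identify $V(F_2(P_n))$ with the triangular region $T=\{(i,j)\in\mathbb{Z}^2 : 1\le i<j\le n\}$, sending $\{i,j\}$ with $i<j$ to the point $(i,j)$. Since the symmetric difference of two $2$-subsets is an edge of $P_n$ exactly when the two sets share one element and the remaining two elements are consecutive, adjacency in $F_2(P_n)$ is precisely a unit step in one coordinate that keeps the point inside $T$. Hence $F_2(P_n)$ is the subgraph of the integer grid induced on $T$: interior points have degree $4$, the three sides $i=1$, $j=n$, $j=i+1$ carry vertices of smaller degree, and the corners $(1,2),(n-1,n),(1,n)$ have degrees $1,1,2$. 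In particular $|V|=\binom{n}{2}$ and the maximum degree is $4$, so one expects $\gamma\approx |V|/5=\tfrac{n^2-n}{10}$, matching the leading term $n^2/10$.

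For the lower bound I would begin from the closed-neighborhood inequality $\sum_{v\in D}|N[v]|\ge |V|$. Writing the total slack as $W=\sum_{v\in D}\bigl(5-|N[v]|\bigr)\ge 0$ gives $\gamma\ge (|V|+W)/5$, and the trivial estimate $W\ge 0$ already yields $\gamma\ge\tfrac{n^2-n}{10}$. To reach $a(n-1)$ I would bound $W$ from below by analyzing the forced waste near the three low-degree corners and the diagonal points adjacent to them: dominating a degree-$1$ leaf such as $(1,2)$ forces a vertex of closed-neighborhood size at most $4$ into $D$, and similarly at $(n-1,n)$ and $(1,n)$, so a constant amount of domination capacity is necessarily unused. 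Combining such an explicit lower bound on $W$ with the integrality of $\gamma$ — i.e.\ taking the ceiling of $(|V|+W)/5$ — should reproduce $a(n-1)$ exactly, the residue classes modulo $5$ arising from whether $\tfrac{n(n-1)}{10}$ is an integer.

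The upper bound is where the real work lies. I would use the \emph{diagonal} set $D_c=\{(i,j): 2i+j\equiv c \pmod 5\}$, which is a perfect code of $\mathbb{Z}^2$: along any closed neighborhood the quantity $2i+j$ is shifted by $0,\pm 1,\pm 2$, hence takes each residue exactly once, so the plus-pentominoes $N[v]$ with $v\in D_c$ tile the plane. Restricting $D_c$ to $T$ dominates the entire interior with exact density $1/5$, contributing $\binom{n}{2}/5+O(n)$ vertices; the only undominated points are those $p\in T$ whose unique code-dominator falls outside $T$, and these all lie within distance $1$ of the three sides. I would repair them by adding or relocating $O(n)$ vertices along the boundary, optimize over the residue $c$, and carry out the exact count of $|D_c\cap T|$ together with the boundary repairs, split by $n\bmod 5$, to obtain the stated $d(n)=\tfrac{1}{10}(n^2+5n+O(1))$. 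The main obstacle is precisely this boundary bookkeeping: one must count exactly how the lines $2i+j\equiv c$ meet the staircase region, locate the uncovered boundary points, and patch them with as few extra vertices as possible, which is what produces the residue-dependent constants in $d(n)$; a secondary difficulty is pinning down the exact additive constant in the lower bound rather than merely its order.
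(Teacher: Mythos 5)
Your upper--bound plan is essentially the paper's own proof: the paper identifies $F_2(P_n)$ with the staircase grid region $I(n)$, takes a chromatic class of the coloring $g(x,y)=(2x+y)\bmod 5$ (or $f(x,y)=(x+2y)\bmod 5$) --- exactly your diagonal code $2i+j\equiv c \pmod 5$ --- restricts it to the region, repairs the boundary by local swaps near the three sides and corners, and counts the result case by case on $n\bmod 5$. So that half of your proposal, while still only a plan (the residue-dependent boundary bookkeeping is the entire content of the paper's Section 4), is the right approach.

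The genuine gap is the lower bound. The paper does not prove $\gamma(F_2(P_n))\geq a(n-1)$ by any volume or discharging argument; it quotes the theorem of G\'omez-Soto et al.\ that the \emph{packing number} satisfies $\rho(F_2(P_{n+1}))=a(n)$, and then uses $\gamma(G)\geq\rho(G)$. Your proposed substitute provably falls short of the constant. Writing $|V|=\binom{n}{2}$ and $W=\sum_{v\in D}(5-|N[v]|)$, your inequality gives $|D|\geq \tfrac{1}{10}\left(n^2-n+2W\right)$, while $a(n-1)=\tfrac{1}{10}\left(n^2-n+C\right)$ with $C=20$ when $n\equiv 0,1\pmod 5$, $C=18$ when $n\equiv 2,4$, and $C=14$ when $n\equiv 3$. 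So for $n\equiv 0,1\pmod 5$ you must force $W\geq 10$. But the corner analysis you describe only forces $W\geq 3$: each of the three low-degree corners $(1,2)$, $(n-1,n)$, $(1,n)$ can be dominated by a neighbor whose closed neighborhood has size $4$, wasting just $1$ apiece, and the long sides of the region force no waste at all (e.g.\ each diagonal vertex lies in the full-size closed neighborhood of a vertex at distance one from the diagonal). With $W\geq 3$ and integrality you get at most $\tfrac{n^2-n}{10}+1$ when $n\equiv 0,1\pmod 5$ (where $n^2-n\equiv 0\pmod{10}$), one less than $a(n-1)=\tfrac{n^2-n}{10}+2$. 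The residue-dependent constants $20,18,14$ are not an artifact of rounding $\tfrac{n(n-1)}{10}$; they encode the exact packing number of $I(n-1)$, whose determination is the subject of an entire separate paper. To close this gap you should either invoke that packing result together with $\gamma\geq\rho$, as the paper does, or supply a substantially stronger forced-waste argument than the corner count you sketch.
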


In Table~1, we present the exact values of $\gamma(F_2(P_n))$ for $n\in \{2,3,\ldots ,25\}$, which were obtained by computer. The table also presents the corresponding values for $d(n)$. We observe that $d(n) < 0$ for $n\in \{2,3\}$. 

\begin{table}[htp]
{\tiny
\begin{tabular}{|c|c|c|c|c|c |c |c |c |c |c |c |c |c |c |c |c |c|c|c |c |c |c |c|c|c |c |c|c|c | }
\hline
$n$ & 2& 3& 4 & 5 & 6 & 7 & 8 & 9 & 10 & 11 & 12& 13  & 14 & 15 & 16 & 17 & 18 & 19 & 20 & 21 & 22 & 23 & 24& 25\\ \hline
$\gamma(F_2(P_n))$ & 1& 1 & 2& 3& 5& 6& 8& 10& 12& 15& 18& 20 & 23 & 27& 30& 34& 38& 42& 47& 51& 56& 61& 66& 72\\ \hline
$d(n)$&- &- & 0& 2& 3& 5& 7& 9& 12& 14& 17& 20 & 23 & 27& 30& 34& 38& 42& 47& 51& 56& 61& 66 & 72\\ \hline
\end{tabular} 
}
\label{tabladn}
\caption{First values for $\gamma(F_2(P_n))$ and $d(n)$.}
\end{table}

We remark that the values for $\gamma(F_2(P_n))$ and $d(n)$ are identical for $n\in \{13,14, \ldots , 25\}$. Furthermore, we we strongly believe the following: 
\begin{conjecture}
If $n\geq 13$, then $\gamma(F_2(P_n))= d(n)$. 
\end{conjecture}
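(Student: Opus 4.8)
The plan is to establish the equality by proving the two matching inequalities $\gamma(F_2(P_n)) \le d(n)$ and $\gamma(F_2(P_n)) \ge d(n)$. The first is exactly the upper-bound half of Theorem~\ref{teoprincipal}, so the whole difficulty is to sharpen the lower bound from $a(n-1)$ to $d(n)$, closing the $\Theta(n)$ gap. The natural first step is to recast the graph geometrically: writing each vertex $\{a,b\}$ as the lattice point $(a,b)$ with $1\le a<b\le n$, adjacency becomes a single orthogonal unit step that stays inside the staircase triangle $T_n=\{(a,b):1\le a<b\le n\}$. Hence $F_2(P_n)$ is the (rook-step-one) grid graph on $T_n$, whose boundary consists of a left side $a=1$, a top side $b=n$, and a diagonal side $b=a+1$, meeting at the three corners $\{1,2\}$, $\{1,n\}$, $\{n-1,n\}$. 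A quick degree count shows interior cells have degree $4$, cells on the left and top sides degree $3$, and cells on the diagonal degree only $2$ (both the $a\mapsto a+1$ and the $b\mapsto b-1$ steps exit $T_n$). It is precisely this asymmetry between the two ordinary sides and the thin diagonal side, together with the three corner interactions, that should account for the residue-dependent constants in $d(n)$.

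The second step is a global charge identity. For a dominating set $D$, put $\mathrm{def}(v):=5-|N[v]|\ge 0$ (the boundary deficiency of $v$) and $e(u):=|N[u]\cap D|-1\ge 0$ (the over-domination of a cell $u$). Double-counting $\sum_{v\in D}|N[v]|=\sum_{u}|N[u]\cap D|$ and using $|V(F_2(P_n))|=\binom{n}{2}$ gives
\[
5|D|\;=\;\binom{n}{2}+\sum_{u}e(u)+\sum_{v\in D}\mathrm{def}(v).
\]
Since $\binom{n}{2}/5=(n^2-n)/10$ and $d(n)$ exceeds this by a term linear in $n$, the desired bound $|D|\ge d(n)$ is equivalent to showing that the total loss $L(D):=\sum_u e(u)+\sum_{v\in D}\mathrm{def}(v)$ is at least the linear quantity $5d(n)-\binom{n}{2}$. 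Thus everything reduces to proving a sharp lower bound, with the correct constant term for each residue class of $n\bmod 5$, on the combined over-domination and boundary deficiency forced by a minimum dominating set.

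The third and central step is a discharging argument along the three sides, modeled on the technique of Gon\c{c}alves et al.~\cite{goncalves} for the mesh. The idea is to attribute the loss $L(D)$ to the boundary: for each boundary cell I would fix a bounded window (its cells within graph-distance two) and, by a finite case analysis of all ways $D$ can meet that window, discharge the excesses $e(u)$ and deficiencies $\mathrm{def}(v)$ so that every boundary cell is credited with an amount summing, side by side, to at least $5d(n)-\binom{n}{2}$. The extremal configurations against which the bound must be tight are the density-$\tfrac15$ diagonal patterns (cosets of the index-$5$ sublattice of $\mathbb{Z}^2$ generated by $(1,2)$ and $(2,-1)$, whose closed neighborhoods tile the plane), patched near each side; understanding these guides the choice of windows and discharging rules. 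Since $d(n+5)-d(n)$ is linear and the extremal patterns are periodic modulo $5$, the already tabulated values for $13\le n\le 17$ supply one base case in each residue class, which together with the corner accounting anchor the exact constants for an $n\mapsto n+5$ organization of the argument.

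I expect the main obstacle to be pinning down the exact constant term, i.e. matching $d(n)$ on the nose rather than only up to $O(1)$. Because the triangle has width growing with $n$, it cannot be reduced to a fixed-width strip amenable to a finite transfer-matrix computation, so the discharging scheme itself must carry the entire global lower bound and therefore has to be tight simultaneously along all three sides and at all three corners. The degree-$2$ diagonal side has no counterpart in the rectangular grid of~\cite{goncalves}, so its local analysis and its two corner interactions (at $\{1,2\}$ and $\{n-1,n\}$) must be developed from scratch; these are the most delicate, highest-risk steps, and they are exactly where the three distinct constants $-30,-34,-36$ of $d(n)$ are determined.
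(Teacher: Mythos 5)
You are attempting to prove what is, in this paper, an open conjecture: the authors establish only $a(n-1)\leq \gamma(F_2(P_n))\leq d(n)$ (Theorem~\ref{teoprincipal}), verify $\gamma(F_2(P_n))=d(n)$ by computer for $13\leq n\leq 25$, and explicitly leave the matching lower bound as a conjecture. So there is no proof in the paper to compare against, and your proposal must stand on its own --- and as it stands it is a research program, not a proof. What you actually establish is sound but slight: the double-counting identity $5|D|=\binom{n}{2}+\sum_u e(u)+\sum_{v\in D}\mathrm{def}(v)$ is correct, and it correctly reduces the conjecture to showing that the total loss $L(D)$ is at least $5d(n)-\binom{n}{2}$, a quantity of order $3n+O(1)$. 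But everything that would deliver this bound --- the windows, the discharging rules along the left, top, and diagonal sides, the finite case analyses, and the three corner accountings with their exact constants $-30,-34,-36$ --- is left entirely unspecified; you flag these yourself as ``developed from scratch'' and ``highest-risk.'' The identity alone gives only the trivial $|D|\geq \binom{n}{2}/5$, and nothing in the proposal excludes a dominating set beating $d(n)$ by a bounded amount. It is worth stressing that the analogous exact result for rectangular grids (Gon\c{c}alves et al.~\cite{goncalves}) required a long, partly computer-assisted case analysis even with only degree-$3$ boundary; your degree-$2$ diagonal side is genuinely harder, since $D$ may simply avoid the diagonal, so the forced loss there must be extracted from over-domination $e(u)$ of diagonal cells rather than from deficiencies $\mathrm{def}(v)$, a distinction your sketch does not engage.

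A second concrete gap is the claimed ``$n\mapsto n+5$ organization'' anchored by the tabulated values for $13\leq n\leq 17$. Discharging is a global, per-$n$ argument: periodicity of the \emph{extremal patterns} modulo $5$ does not by itself yield any induction transferring a lower bound on minimum dominating sets from $n$ to $n+5$, and no such transfer lemma is known or sketched here; without one, the small cases are consistency checks, not base cases of anything. (A minor slip in the same geometric setup: the corners $\{1,2\}$ and $\{n-1,n\}$ have degree $1$, not the degree $2$ your side-by-side count suggests, so the corner deficiencies in any eventual accounting are $3$, not $2$.) In short: the upper bound half is indeed Theorem~\ref{teoprincipal}, your reduction of the lower bound to a sharp loss inequality is a reasonable and correctly set up first step, but the inequality itself --- which is the entire content of the conjecture --- remains unproved.
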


In 2018, G\'omez-Soto et al.~\cite{gomez} determined the exact value of $\rho(F_2(P_n))$, the {\em packing number} of 
$F_2(P_n)$. More precisely, they proved the following result.  

\begin{theorem}\label{t:gomez}~\cite[Theorem 3]{gomez}
If $n\geq 6$ and $a(n)$ is defined as in Theorem~\ref{teoprincipal}, then $\rho(F_2(P_{n+1}))=a(n)$.
\end{theorem}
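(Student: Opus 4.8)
The plan is to recast the problem geometrically. Writing a vertex $\{i,j\}$ with $i<j$ as the lattice point $(i,j)$, the adjacency rule (symmetric difference an edge of $P_{n+1}$) says that $(i,j)$ and $(k,l)$ are adjacent exactly when one coordinate agrees and the other changes by $1$. Hence $F_2(P_{n+1})$ is the grid graph $T$ on the triangular board $B=\{(i,j):1\le i<j\le n+1\}$ with the usual $\ell_1$ (rook-step) adjacency, and $|V(T)|=\binom{n+1}{2}=\tfrac{n^2+n}{2}$. Recall that the packing number $\rho$ is the largest number of vertices pairwise at distance at least $3$; equivalently, a $2$-packing is a set whose closed neighborhoods are pairwise disjoint. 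I would record two elementary facts: every grid step changes the $\ell_1$-position by $1$, so the graph distance in $T$ is at least the $\ell_1$-distance (with equality away from the diagonal $j=i+1$, where the forbidden cells $i=j$ can force detours); and the closed neighborhood of an interior vertex is a cross (plus) pentomino of $5$ cells and diameter $2$. Since the target value satisfies $a(n)=\tfrac{1}{10}(n^2+n)+O(1)=\tfrac15\binom{n+1}{2}+O(1)$, the extremal configurations must be governed by the density-$\tfrac15$ tiling of $\mathbb Z^2$ by these cross pentominoes (the perfect Lee code of radius $1$), with only a bounded correction coming from the three sides of the triangle.

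The unifying object I would build is a near-perfect tiling of $B$ by cross pentominoes. In the interior the cross tiles sit on a sublattice such as $L_c=\{(x,y):2x+y\equiv c\pmod 5\}$, whose minimum nonzero $\ell_1$-distance is $3$; I would choose the residue $c$ and slide the pattern so that, after intersecting with $B$, all but a bounded number of cells are covered by full crosses lying inside $B$, the remaining cells forming a constant number of small pieces along the boundary, each chosen to be a subset of a genuine closed neighborhood $N[w]$ with $w\in B$ (so that each small piece still has $T$-diameter at most $2$, even near the diagonal). This single partition yields both bounds. For the upper bound $\rho\le a(n)$: a $2$-packing contains at most one vertex from every piece, so $\rho$ is at most the number of pieces, which I arrange to equal $a(n)$. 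For the lower bound $\rho\ge a(n)$: I would select one representative from each piece — the center of each full cross together with one cell from each small piece — and verify that these are pairwise at $\ell_1$-distance at least $3$, hence at graph distance at least $3$; this gives an explicit packing of size equal to the number of pieces, namely $a(n)$.

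Carrying this out requires pinning down the exact number of pieces as a function of $n\bmod 5$. The interior contributes $\tfrac15\binom{n+1}{2}$ up to lower-order terms, and the entire difficulty lies at the boundary: the three bounding lines $i=1$, $j=n+1$, and the diagonal $j=i+1$ each sit in a different phase relative to the period-$5$ cross pattern, and the three corners where two lines meet must be analyzed individually. Because $a(n)-\tfrac15\binom{n+1}{2}$ is only a constant, the total deficit $\sum(5-|\text{piece}|)$ over the small boundary pieces must be forced to a specific small constant (matching the constants $14,18,20$ in $a(n)$), which means showing both that a tiling with so few uncovered cells exists and that no $2$-packing can do better.

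The main obstacle is exactly this tightness of the additive constant. Crude estimates are easy but weak: intersecting the infinite cross tiling with $B$ and truncating the crossing tiles produces $\Theta(n)$ defective pieces, and dually the disjoint-closed-neighborhood bound, which for interior vertices reads $|N[v]|=5$, only yields $\rho\le\tfrac15|V(T)|+O(n)$ once the smaller boundary neighborhoods are accounted for; both miss the truth by a linear term. The real content is that the linear boundary contributions cancel, so that the slack is globally bounded. I expect the delicate part to be engineering the boundary of the tiling — re-gluing the truncated border crosses into full interior crosses so that only $O(1)$ cells remain uncovered — and then proving a matching lower bound for the deficit via a careful local (discharging-type) argument along the three sides, which is what ultimately dictates the three-way case split.
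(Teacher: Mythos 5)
First, a contextual note: this paper does not prove Theorem~\ref{t:gomez} at all; it is imported verbatim from~\cite[Theorem 3]{gomez} and used only through the remark $\gamma(F_2(P_n))\geq \rho(F_2(P_n))\geq a(n-1)$. So your proposal has to be judged as a reconstruction of the proof in~\cite{gomez}, and as such it is a strategy outline rather than a proof. The geometric reformulation (the triangular board with $\ell_1$-adjacency) and the governing structure (the density-$\frac{1}{5}$ cross/Lee sublattice $L_c=\{2x+y\equiv c \pmod 5\}$, which is literally the coloring $g$ used in Section~3 of this paper) are the right objects. But everything that makes the statement a theorem is deferred: you never exhibit the near-perfect tiling, never derive the deficits $7$, $9$, $10$ (equivalently the constants $14$, $18$, $20$ in $a(n)$, via $\sum(5-|\mathrm{piece}|)=5\,a(n)-\binom{n+1}{2}$), never construct the representative packing near the three corners where two boundary lines meet, and the five residue classes --- which is where all of the case analysis and essentially all of the work lives --- are handled with ``I would choose the residue $c$ and slide the pattern'' and ``I arrange to equal $a(n)$.'' Your sketch would yield $\rho(F_2(P_{n+1}))=\frac{n^2}{10}+O(n)$; it does not yield the exact formula, which is the content of the theorem.

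Second, there is a structural soundness issue in your upper bound, not merely an omitted computation. You bound $\rho$ by the number of parts in a partition of the vertex set into sets of diameter at most $2$, so you need such a partition with exactly $a(n)$ parts. The truth of the theorem does not guarantee that one exists: the minimum number of diameter-$\le 2$ parts can strictly exceed the packing number (for $C_7$ one has $\rho=2$, yet any partition into diameter-$\le 2$ parts needs $3$ pieces), so this duality-type gap could in principle defeat your mechanism even though the statement is true. You must either actually produce the partition --- which is precisely the boundary and corner engineering you postponed --- or switch to a different upper-bound argument (for instance the disjointness count $\sum_{v\in S}|N[v]|\le |V|$ sharpened by a discharging scheme that charges packing vertices near the boundary for their lost neighborhood cells); you mention discharging only in passing, with no details. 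As it stands, neither inequality of $\rho(F_2(P_{n+1}))=a(n)$ is established by the proposal.
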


\begin{remark} From Theorem~\ref{t:gomez} and the well-known fact that $\gamma(G)\geq \rho(G)$ for any graph $G$, it follows that $\gamma(F_2(P_n))\geq a(n-1)$. Therefore, to prove Theorem~\ref{teoprincipal}, it suffices to show that 
$\gamma(F_2(P_n))\leq d(n)$. 
\end{remark}


\section{Preliminaries}
Let $\mathbb{Z}^2=\mathbb{Z}\times \mathbb{Z}$. Let $H$ be the graph with vertex set $\mathbb{Z}^2$ in which two vertices $(i,i')$ and $(j,j')$ are adjacent  iff $|i-j|+|i'-j'|=1$.
For $n\in \{2, 3, \ldots\}$, let $H(n)$ be the  induced subgraph of $H$ on the following vertex subset: 
\[
\{(i,j) \colon 0\leq i, j\leq n \;\textup{ and } \; i-j\leq 1 \}\cup\{(n+1,n)\}. 
\]
Let $I(n)$ be the induced subgraph of $H(n)$ on the following vertex subset:
\[
\{(i,j) \colon 1\leq i, j \leq n-1  \;\textup{ and } \; i-j\leq 0 \}. 
\]

\begin{figure}[h]
\begin{center}
\includegraphics[scale=1.2]{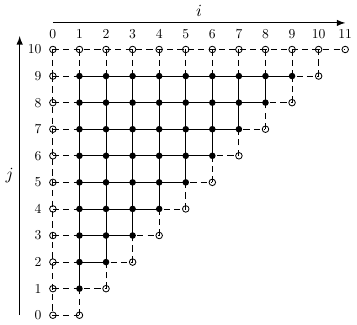}
\caption{$H(10)$ and $I(10)$; $I(10)$ is drawn with continuous segments and black vertices.}
\label{f:H(10)&I(10)}
\end{center}
\end{figure}
We abreviate the set $\{1,2,\ldots ,n\}$ as $[n]$, and assume that $P_n$ has vertex set $[n]$ and edge set $E(G):=\{\{i,i+1\} \colon 1\leq i \leq n-1\}$. Then, $V(F_2(P_{n}))=\{\{i,j\}:1\leq i < j \leq n \}$. With the function
\[
h \colon V(I(n)) \to V(F_2(P_{n}))
\]
defined by $h((i,j)):=\{i,j+1\}$, we have that $I(n)\simeq F_2(P_{n})$.

Let $\mathbb{Z}_5$ denote the set of congruence classes of integers modulo $5$. 
Consider the functions $f, g \colon \mathbb{Z}^2 \to \mathbb{Z}_5$ defined by $f(x,y):=(x+2y)\bmod 5$ and $g(x,y):=(2x+y)\bmod 5$.
For $t\in \mathbb{Z}^+$ and $(i,j)\in \mathbb{Z}^2$, we define $f_t(i,j):=f(i-t,j)$ and $g_t(i,j):=g(i-t,j)$. We refer to  $f_t$ (respectively, $g_t$) as the \textit{t-translation} of $f$ (respectively, $g$). The elements of $\mathbb{Z}_5$ will be called the {\em colors} of $f$ (respectively, $g$), and for $l\in \mathbb{Z}_5$, we denote by  $f^{-1}(l)$ 
(respectively, $g^{-1}(l)$) the $l$-chromatic class of $H$ under $f$ (respectively, $g$).

The following properties of $f, g, f_t$ and $g_t$ are immediate from the definitions. 
\begin{proposition}\label{propgf}
Let $i,i',j,j'\in \mathbb{Z}$ and $l\in \mathbb{Z}_5$.
\begin{enumerate}
\item $f$ and $g$ define $5$-proper colorings of $H$.
\item $f^{-1}(l)$ and $g^{-1}(l)$ are dominant sets of $H$.
\item $f(i,j)=f(i,j')$ and $g(i,j)=g(i,j')$ iff $j-j'$ is a multiple of $5$.
\item $f(i,j)=f(i',j)$ and $g(i,j)=g(i',j)$ iff $i-i'$ is a multiple of $5$.
\item $f(i+1,i)=f(j+1,j)$ and $g(i+1,i)=g(j+1,j)$ iff  $i-j$ is a multiple of $5$.
\item If $t \in \mathbb{Z}^+$, then $f^{-1}(f(i,j))=f_t^{-1}(f(i-t,j))$ and $g^{-1}(g(i,j))=g_t^{-1}(g(i-t,j))$.
\end{enumerate}
\end{proposition}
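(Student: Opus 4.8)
The plan is to reduce every item to two elementary ingredients: the way $f$ and $g$ change under the two elementary moves that generate the edges of $H$, and the fact that $2$ and $3$ are units in $\mathbb{Z}_5$. Concretely, a horizontal unit step changes $x$ by $\pm 1$, so it changes $f$ by $\pm 1$ and $g$ by $\pm 2$; a vertical unit step changes $y$ by $\pm 1$, so it changes $f$ by $\pm 2$ and $g$ by $\pm 1$. I would record these four increments first, since items (1) and (2) follow from them directly.

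For item (1), I would observe that each increment $\pm 1, \pm 2$ is a nonzero element of $\mathbb{Z}_5$; hence across every edge of $H$ the value of $f$ (and of $g$) strictly changes, so adjacent vertices receive distinct colors. Since $f$ and $g$ are clearly surjective onto $\mathbb{Z}_5$, both are proper $5$-colorings. For item (2), the key observation is that the closed neighborhood of any vertex $v=(x,y)$ is a rainbow: its members are $v$ together with its four grid neighbors, whose $f$-values are exactly $f(v), f(v)\pm 1, f(v)\pm 2$, and these run over all of $\mathbb{Z}_5$. Thus every color $l$ occurs in the closed neighborhood of every vertex, so each class $f^{-1}(l)$ meets the closed neighborhood of every vertex; that is, $f^{-1}(l)$ is dominating (in fact an efficient dominating set). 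The same computation works verbatim for $g$.

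Items (3)--(5) are pure arithmetic. Subtracting the two expressions in each case gives
\[
f(i,j)-f(i,j')\equiv 2(j-j'),\qquad g(i,j)-g(i,j')\equiv j-j' \pmod 5,
\]
\[
f(i,j)-f(i',j)\equiv i-i',\qquad g(i,j)-g(i',j)\equiv 2(i-i') \pmod 5,
\]
\[
f(i+1,i)-f(j+1,j)\equiv 3(i-j),\qquad g(i+1,i)-g(j+1,j)\equiv 3(i-j)\pmod 5.
\]
Because $2$ and $3$ are invertible modulo $5$, each of these differences vanishes iff the corresponding difference $j-j'$, $i-i'$, or $i-j$ is a multiple of $5$, which is precisely what items (3), (4), and (5) assert.

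For item (6), I would unwind the definition $f_t(a,b)=f(a-t,b)$ to obtain
\[
f_t^{-1}\big(f(i-t,j)\big)=\{(a,b)\colon f(a-t,b)=f(i-t,j)\}.
\]
Since $f(a-t,b)-f(i-t,j)=f(a,b)-f(i,j)$ (the shift $-t$ in the first coordinate cancels), the defining condition is equivalent to $f(a,b)=f(i,j)$, so this set equals $f^{-1}(f(i,j))$; the argument for $g_t$ is identical. I do not expect any genuine obstacle: the only step that is not a one-line algebraic identity is the rainbow observation underlying item (2), and even that is immediate once the four increments are listed.
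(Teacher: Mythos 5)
Your proposal is correct in every detail: the increment computations, the rainbow closed-neighborhood argument for domination, the differences $2(j-j')$, $i-i'$, $3(i-j)$, etc.\ with the invertibility of $2$ and $3$ in $\mathbb{Z}_5$, and the cancellation of the shift $-t$ in item (6) are all accurate. The paper offers no proof at all---it declares these properties ``immediate from the definitions''---and your write-up is precisely the routine verification the authors had in mind, so there is nothing to compare beyond noting that you have supplied the details they omitted.
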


For $k,r \in [n]$ with $k<r$, let us denote by $H^{k,r}(n)$ the induced subgraph of $H(n)$ by the following
subset of vertices: 
\[
\{(x,y): 0 \leq x \leq n, \; k \leq y \leq r \}. 
\]

If $n\in \{4,5,\ldots\}$, $r\in \mathbb{Z}_5$ and $h\in \{f,g\}$, then we shall use $h^{-1}_n(r)$ to denote $H(n)\cap h^{-1}(r)$. Similarly, we define
\begin{align*}\begin{split}
X_r^l(h)&:=\{(1,i) \mid 0<i<n,\; (0,i)\in h^{-1}_n(r)\}\cup\{(1,1) \mid (0,0) \in  h^{-1}_n(r)\}\\
&\cup\{(1, n-1) \mid (0, n) \in  h^{-1}_n(r)\},\\
X_r^u(h)&:=\{(i,n-1) \mid 0<i<n,\; (i,n)\in h^{-1}_n(r)\}\\
&\cup 
\{(n-1, n-1) \mid (n, n)  \text{ or } (n+1, n) \text{ belongs to } h^{-1}_n(r)\},\\
X_r^d(h)&:=\{(i-1,i) \mid 0<i<n,\; (i,i-1)\in h^{-1}_n(r)\}\cup \{(1, 1) \mid (1, 0) \in h^{-1}_n(r)  \} \\
& \cup 
 \{(n-1,n-1) \mid (n,n-1)\in h^{-1}_n(r)\},\\
X_r(h)&:= \left(X_r^l(h)\cap X_r^u(h)\right)\cup \left(X_r^l(h)\cap X_r^d(h)\right) \cup \left(X_r^u(h)\cap X_r^d(h)\right),\\  
D'_r(h)&:=\left(I(n)\cap h^{-1}(r)\right)\cup X_r^l\cup X_r^u \cup X_r^d.
\end{split}
\end{align*}

\begin{corollary}\label{coroD'} If $n, r, h$ and $D'_r(h)$ are as above, then the following hold:
\begin{enumerate}
\item $h^{-1}_n(r)$ is a dominating set for $I(n)$.
\item $D'_r(h)$ is a dominating set for $I(n)$ and $|D'_r(h)|=|h^{-1}_n(r)|-|X_r(h)|$.
\end{enumerate}
\end{corollary}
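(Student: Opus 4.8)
The plan is to deduce both parts from Proposition~\ref{propgf}(2), which states that each color class $h^{-1}(r)$ dominates the infinite grid $H$. For part~(1) I would first verify the elementary containment
\[
N_H[(i,j)]=\{(i,j),(i\pm 1,j),(i,j\pm 1)\}\subseteq V(H(n))\quad\text{for all }(i,j)\in V(I(n)).
\]
Writing an interior vertex as $1\leq i\leq j\leq n-1$, each of the five points meets the defining inequalities $0\leq x,y\leq n$ and $x-y\leq 1$ of $H(n)$ (for instance $(i,j+1)$ is admissible because $j+1\leq n$ and $i-(j+1)\leq -1$). Since $h^{-1}(r)$ dominates $(i,j)$ in $H$, a dominating vertex lies in $N_H[(i,j)]\subseteq V(H(n))$, hence in $H(n)\cap h^{-1}(r)=h^{-1}_n(r)$. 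Thus $h^{-1}_n(r)$ dominates $I(n)$.

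For part~(2) I would view $D'_r(h)$ as the result of \emph{folding} $h^{-1}_n(r)$ into $V(I(n))$: one keeps the interior color-$r$ vertices $I(n)\cap h^{-1}(r)$ and replaces each color-$r$ vertex on the boundary $V(H(n))\setminus V(I(n))$ by the interior vertex recorded in $X^l_r$, $X^u_r$, or $X^d_r$, according to whether it lies on the left edge $x=0$, the top edge $y=n$, or the lower diagonal $x=y+1$. The domination claim reduces to a local check: every interior vertex dominated by a boundary vertex $w$ is also dominated by its assigned image $w'$. I would carry this out edge by edge — a left vertex $(0,i)$ dominates only $(1,i)=w'$; a top vertex $(i,n)$ dominates only $(i,n-1)=w'$; and a diagonal vertex $(i,i-1)$ dominates exactly $(i-1,i-1)$ and $(i,i)$, both neighbors of its image $(i-1,i)=w'$ — and then dispose of the corner and extra vertices $(0,0),(0,n),(1,0),(n,n),(n,n-1),(n+1,n)$ one at a time. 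Together with part~(1) this shows $D'_r(h)$ dominates $I(n)$.

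For the cardinality I would aim to write $D'_r(h)$ as the disjoint union $\bigl(I(n)\cap h^{-1}(r)\bigr)\sqcup\bigl(X^l_r\cup X^u_r\cup X^d_r\bigr)$ and count. Two inputs drive the count. First, the three edges partition $V(H(n))\setminus V(I(n))$; each boundary color-$r$ vertex has a single image, and within each $X^{\bullet}_r$ distinct sources give distinct images because adjacent boundary vertices receive distinct colors (Proposition~\ref{propgf}(1)), so $|X^l_r|+|X^u_r|+|X^d_r|$ equals the number of boundary color-$r$ vertices, i.e.\ $|h^{-1}_n(r)|-|I(n)\cap h^{-1}(r)|$. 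Second, each image is a fixed translate of its source; for the edge projections this shifts the $\mathbb{Z}_5$-color by a fixed nonzero amount (e.g.\ $(0,i)\mapsto(1,i)$ shifts $f$ by $+1$ and $g$ by $+2$), so these images avoid color $r$ and are disjoint from the interior color-$r$ vertices. Finally, for $n\geq 4$ the three image-lines $x=1$, $y=n-1$, $y=x+1$ meet only pairwise — at the distinct points $(1,n-1),(1,2),(n-2,n-1)$ — so $X^l_r\cap X^u_r\cap X^d_r=\emptyset$ and inclusion–exclusion collapses to $|X^l_r\cup X^u_r\cup X^d_r|=|X^l_r|+|X^u_r|+|X^d_r|-|X_r(h)|$. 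Assembling these equalities yields $|D'_r(h)|=|h^{-1}_n(r)|-|X_r(h)|$.

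The step I expect to be the main obstacle is the bookkeeping at the corners and at the isolated extra vertex $(n+1,n)$, where the clean ``one fresh image per source'' picture can break down. Two distinct boundary vertices may be folded onto the same corner of $I(n)$ — exactly the data recorded by $X_r(h)$ — and, more delicately, a corner image can coincide \emph{in color} with an interior color-$r$ vertex. Indeed the translation $(n+1,n)\mapsto(n-1,n-1)$ shifts $g$ by $0$, so under $g$ the image of the extra vertex can land on an already-present interior color-$r$ vertex; pinning down precisely when such color-coincidences occur is what determines whether the stated identity is exact or must be read as an inequality. Checking that the remaining corner projections $(0,0),(0,n),(n,n),(1,0),(n,n-1)$ each have a nonzero color-shift, and enumerating the finitely many configurations in which two images collide, is the part requiring genuine care; the infinite-grid domination input and the accompanying inequality checks are then routine.
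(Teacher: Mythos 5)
Your outline follows the same route as the paper's own proof, which consists of two sentences asserting that part (1) follows from Proposition~\ref{propgf}(2) together with the definitions of $H(n)$ and $I(n)$, and that part (2) follows from part (1) and the definitions of $X_r^l(h)$, $X_r^u(h)$, $X_r^d(h)$, $X_r(h)$ and $D'_r(h)$; your part (1) and the domination half of part (2) are correct fleshed-out versions of exactly that argument. One local repair: inside $X_r^u(h)$, injectivity of the source-to-image assignment does not follow from properness of the coloring alone, because the two possible sources $(n-1,n)$ and $(n+1,n)$ of the image $(n-1,n-1)$ are \emph{not} adjacent; you need Proposition~\ref{propgf}(4) (equal colors in a row occur only at horizontal distance a multiple of $5$) to exclude their both having color $r$.

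More importantly, the obstacle you flag at the extra vertex $(n+1,n)$ is not mere bookkeeping: the equality in part (2) is genuinely false in that case, and the paper's terse proof overlooks it. Since $g(n+1,n)-g(n-1,n-1)=5\equiv 0\pmod 5$, these two vertices always share a $g$-color. Hence if $h=g$ and $r\equiv 3n+2\pmod 5$, then $(n+1,n)\in g_n^{-1}(r)$, its image $(n-1,n-1)$ already lies in $I(n)\cap g^{-1}(r)$, and this coincidence is recorded neither by $X_r(g)$ (which only sees pairwise intersections of the three $X$-sets) nor anywhere else, so the union defining $D'_r(g)$ loses one element: $|D'_r(g)|=|g_n^{-1}(r)|-|X_r(g)|-1$. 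Concretely, for $n=5$, $h=g$, $r=2$: $g_5^{-1}(2)=\{(0,2),(1,0),(1,5),(2,3),(4,4),(6,5)\}$ has six elements and $X_2(g)=\emptyset$, yet $D'_2(g)=\{(1,1),(1,2),(1,4),(2,3),(4,4)\}$ has only five. So your proof cannot be completed exactly as the statement stands; you should either prove the equality under the extra hypothesis that $h=f$, or that $h=g$ with $r\not\equiv 3n+2\pmod 5$ (your finite checks then do close the argument, since every other source-to-image translation shifts both $f$ and $g$ by a nonzero amount modulo $5$), or else weaken the equality to $|D'_r(h)|\le |h_n^{-1}(r)|-|X_r(h)|$. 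Neither change harms the paper: the five cases in the proof of Theorem~\ref{teoprincipal} use $(h,r)=(f,4)$ or $(g,r)$ with $r\not\equiv 3n+2\pmod 5$, so the exceptional case never arises there, and for an upper bound on the domination number the inequality would suffice anyway.
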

\begin{proof}
Assertion 1 follows directly from Proposition~\ref{propgf}~(2) and the definitions of $H(n)$ and $I(n)$. Assertion 2 is an immediate consequence of Assertion 1 and the definitions of $X_r^l(h),~X_r^u(h),~X_r^d(h),~X_r(h)$ and $D'_r(h)$.  
\end{proof}

If $U$ is a vertex subset of $I(n)$, then 
$$N[U]:=U\cup \{(x,y)\in I(n)~:~(x,y) \mbox{ has a neighbour in } U\}.$$ 

The proof of the following proposition is a straightforward exercise. 

\begin{proposition}\label{p:AB} Let $n, r, h$ and $D'_r(h)$ be as above. If there exist $A_r\subset D'_r(h)$ and 
$B_r\subset V(I(n))\setminus D'_r(h)$ such that $N[D'_r(h)]\subseteq N[(D'_r(h)\setminus A_r)\cup B_r]$, then 
$D_r(h):=(D'_r(h)\setminus A_r)\cup B_r$ is a dominating set of $I(n)$ with cardinality 
$|D_r(h)|=|D'_r(h)|-|A_r|+|B_r|$. 
\end{proposition}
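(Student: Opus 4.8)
The plan is to split the statement into its two independent assertions---that $D_r(h)$ dominates $I(n)$, and that it has the stated cardinality---and to verify each directly from the definitions. I do not expect a genuine obstacle here; the only thing requiring care is to keep track of the fact that the closed neighbourhood $N[\cdot]$ is taken \emph{inside} $I(n)$, so that every such set is automatically contained in $V(I(n))$.

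For the domination claim, I would start from the observation that a set $S\subseteq V(I(n))$ is a dominating set of $I(n)$ if and only if $N[S]=V(I(n))$. By Corollary~\ref{coroD'}(2), $D'_r(h)$ is a dominating set, so $N[D'_r(h)]=V(I(n))$. The hypothesis supplies the inclusion $N[D'_r(h)]\subseteq N[D_r(h)]$, and since $D_r(h)\subseteq V(I(n))$ we have $N[D_r(h)]\subseteq V(I(n))$ by definition. Chaining these together, $V(I(n))=N[D'_r(h)]\subseteq N[D_r(h)]\subseteq V(I(n))$, which forces $N[D_r(h)]=V(I(n))$, i.e. $D_r(h)$ dominates $I(n)$.

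For the cardinality, I would use the two containments built into the hypothesis. Since $A_r\subseteq D'_r(h)$, we get $|D'_r(h)\setminus A_r|=|D'_r(h)|-|A_r|$. Since $B_r\subseteq V(I(n))\setminus D'_r(h)$, the set $B_r$ is disjoint from $D'_r(h)$ and hence from $D'_r(h)\setminus A_r$, so the union defining $D_r(h)$ is a disjoint one. Therefore $|D_r(h)|=|D'_r(h)\setminus A_r|+|B_r|=|D'_r(h)|-|A_r|+|B_r|$. The only bookkeeping subtlety---the main ``obstacle'' such as it is---is that the disjointness $B_r\cap(D'_r(h)\setminus A_r)=\emptyset$ relies solely on $B_r\cap D'_r(h)=\emptyset$, which is precisely the hypothesis $B_r\subseteq V(I(n))\setminus D'_r(h)$. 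No structural information about $f$, $g$, or the sets $X_r^l,X_r^u,X_r^d$ enters the argument, which is why this is correctly described as a straightforward exercise.
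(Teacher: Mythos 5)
Your proof is correct, and since the paper itself omits the argument (declaring it ``a straightforward exercise''), your write-up is precisely the intended one: domination follows from $N[D'_r(h)]=V(I(n))$ together with the hypothesis and $N[D_r(h)]\subseteq V(I(n))$, and the cardinality follows from $A_r\subseteq D'_r(h)$ and the disjointness of $B_r$ from $D'_r(h)$. Nothing is missing.
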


\begin{proposition}\label{conteoh}
Let $n\geq i \geq 4$, $h\in \{g,f\}$ and $l \in \mathbb{Z}_5$. Then $|H^{i-4,i}(n)\cap h^{-1}(l)|=i$.
\end{proposition}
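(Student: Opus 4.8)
The plan is to give an explicit description of $V(H^{i-4,i}(n))$ and then count colors column by column. First I would argue that, uniformly in $i$,
\[
V(H^{i-4,i}(n))=\{(x,y)\in\mathbb{Z}^2 : i-4\le y\le i,\ 0\le x\le y+1\}.
\]
For $y<i$ this is immediate, since then $x\le y+1\le i\le n$. The only delicate point is the top row $y=i$: when $i<n$ the bound $x\le y+1=i+1\le n$ is harmless, whereas when $i=n$ the point with $x=y+1=n+1$ is precisely the extra vertex $(n+1,n)$ of $H(n)$, and so it belongs to $H^{i-4,i}(n)$ as well. Getting this bookkeeping right is the one genuinely delicate step, and it is what makes the description above independent of whether $i<n$ or $i=n$.

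Next I would split the columns (fixed $x$) into full and partial ones. A column $x=x_0$ contains all five rows $y=i-4,\dots,i$ iff $(x_0,i-4)$ lies in the set above, i.e. iff $0\le x_0\le i-3$; there are exactly $i-2$ such columns (note $i-3\ge 1$ since $i\ge4$). By Proposition~\ref{propgf}(3), along any fixed column the value $h(x_0,y)$ is injective on five consecutive values of $y$, hence a full column meets each of the five color classes exactly once. Therefore the full columns contribute exactly $i-2$ to $|H^{i-4,i}(n)\cap h^{-1}(l)|$ for every color $l$ and every $h\in\{f,g\}$.

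It then remains to count colors on the residual set $R:=\{(x,y)\in V(H^{i-4,i}(n)) : x\ge i-2\}$. Here I would note that $R$ is a fixed triangular array of $10$ points: the columns $x=i-2,i-1,i,i+1$ carry $4,3,2,1$ points respectively, and $R$ is the same set regardless of whether $i<n$ or $i=n$, since its rightmost point is $(i+1,i)$, which for $i=n$ is exactly the special vertex $(n+1,n)$. Writing $a=x-(i-2)$ and $b=y-(i-3)$ identifies $R$ with $\{(a,b):0\le a\le b\le 3\}$, and on it $f(x,y)\equiv a+2b+c\pmod 5$ and $g(x,y)\equiv 2a+b+c'\pmod 5$ for constants $c,c'$ depending only on $i$. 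Since adding a constant merely permutes $\mathbb{Z}_5$, the color distribution on $R$ is a shift of the multiset $\{(a+2b)\bmod 5\}$ (for $f$) or $\{(2a+b)\bmod 5\}$ (for $g$), and a direct enumeration of the ten points shows each residue occurs exactly twice in either multiset; hence every color appears exactly twice on $R$.

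Combining the two contributions yields $|H^{i-4,i}(n)\cap h^{-1}(l)|=(i-2)+2=i$ for each $l\in\mathbb{Z}_5$ and each $h\in\{f,g\}$, as claimed. I expect the only real obstacle to be the boundary analysis at the row $y=i$: once the special vertex $(n+1,n)$ is correctly folded into the uniform description of the vertex set, the count collapses to the full-column principle (Proposition~\ref{propgf}(3)) plus a routine finite check on the $10$-point triangle $R$.
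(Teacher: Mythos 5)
Your proof is correct, but it takes a genuinely different route from the paper's. The paper does not decompose the band $H^{i-4,i}(n)$ at all: it first proves that the five chromatic classes are equinumerous on the band, and then divides the total vertex count $5i$ by $5$. The equidistribution step is a translation argument: the vertices colored $c_{t+1}$ by $g$ are precisely those colored $c_t$ by the $1$-translation $g_1$, and by Proposition~\ref{propgf}(6) the $g_1$-count on the band equals the $g$-count plus $p-q$, where $p$ counts $c_t$-vertices in the entering column $\{(-1,i-4),\dots,(-1,i)\}$ and $q$ counts those in the leaving diagonal $\{(i-3,i-4),\dots,(i+1,i)\}$; parts (3) and (5) of Proposition~\ref{propgf} give $p=q=1$, so consecutive classes (hence all classes) have equal size. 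Your proof replaces this symmetry argument with a direct decomposition: $i-2$ full columns, each meeting every color exactly once by Proposition~\ref{propgf}(3), plus the fixed ten-point triangle $R$, on which each color occurs exactly twice (your enumeration of $(a+2b)\bmod 5$ and $(2a+b)\bmod 5$ over $0\le a\le b\le 3$ checks out: each residue appears twice in both multisets). The paper's route buys economy --- no enumeration, and the count drops out of equidistribution plus the total $5i$; your route buys elementarity and self-containment, since it needs only Proposition~\ref{propgf}(3) and a finite check, and it computes each class directly rather than through the total. Your route also makes explicit a point the paper leaves implicit: for $i=n$ the vertex $(i+1,i)=(n+1,n)$ must be counted in the band, even though the stated definition of $H^{k,r}(n)$ restricts to $0\le x\le n$; the paper silently includes $(i+1,i)$ both in its total of $5i$ and in the set defining $q$, whereas your uniform description of the vertex set handles this bookkeeping up front.
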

\begin{proof} We will only prove the case $h=g$, as the case $h=f$ follows similarly. First, we show that  $|H^{i-4,i}(n)\cap g^{-1}(r)|=|H^{i-4,i}(n)\cap g^{-1}(r')|$ for any $r,r' \in [0,4]$. Since the colors $c_0:=g(0,l)$, $c_1:=g(1,l)$, $c_2:=g(2,l)$, $c_3:=g(3,l)$, $c_4:=g(4,l)$ are pairwise distinct by Proposition~\ref{propgf}~(4), it is enough to show that $|H^{i-4,i}(n)\cap g^{-1}(c_t)|=|H^{i-4,i}(n)\cap g^{-1}(c_{t+1})|$ for any $t\in [0,3]$. Without loss of generality, we assume that $c_t$ is the color blue and that $c_{t+1}$ is the color red. Let $b:=|H^{i-4,i}(n)\cap g^{-1}(c_t)|$. According to the definition of $g_1$, the vertices of $H$ colored red by $g$ are precisely the vertices colored blue by $g_1$.  Let $b_1$ be the number of vertices of $H^{i-4,i}(n)$ that are colored blue by $g_1$. Then Proposition~\ref{propgf}~(6) implies that $b_1=b+p-q$, where $p$ is the number of vertices in $$\{(-1,i-4), (-1,i-3),(-1,i-2),(-1,i-1),(-1,i) \}$$ that are colored blue by $g$, and $q$ is the number of vertices in $$\{(i-3,i-4),(i-2,i-3),(i-1,i-2),(i,i-1), (i+1,i)\}$$ that are colored blue by $g$. By Proposition~\ref{propgf}~(3) and (5), we have $p=q=1.$ Therefore, we conclude that $b=b_1$, which implies that $|H^{i-4,i}(n)\cap g^{-1}(c_t)|=|H^{i-4,i}(n)\cap g^{-1}(c_{t+1})|$.
 
Since the total number of vertices in $H^{i-4,i}(n)$ is $(i+2)+(i+1)+i+(i-1)+(i-2)=5i$, and $|H^{i-4,i}(n)\cap g^{-1}(r)|=|H^{i-4,i}(n)\cap g^{-1}(r')|$ for any $r,r' \in [0,4]$, we conclude that $|H^{i-4,i}(n)\cap g^{-1}(r)|=i$ for any $r\in [0,4]$. 
\end{proof}


\section{$\gamma(F_2(P_n))\leq d(n)$: Proof of Theorem~\ref{teoprincipal}}

The exact value of $\gamma(F_2(P_n))$ for $n\leq 25$ was obtained by computer. For $n\geq 20$, we employ a 
constructive approach. Our strategy to show that $\gamma(F_2(P_n))\leq d(n)$ is as follows:

{\bf Step 1}. Based on the congruence of $n$ modulo $5$, we select $h\in \{f,g\}$ and consider the set $h_n^{-1}(r)$,
where $n \equiv r \pmod 5$. The vertices of $h_n^{-1}(r)$ are enclosed by blue circles in Figures~2, 3, 4, 5 and 6.  

{\bf Step 2}. From $h_n^{-1}(r)$ we obtain $D'_r(h)$, which is a dominating set of $I(n)$ 
with cardinality $|h^{-1}_n(r)|-|X_r(h)|$.  In Figures~2, 3, 4, 5 and 6, arrows indicate the transformation from $h_n^{-1}(r)$ to $D'_r(h)$ by swapping the set of tail-vertices with the set of head-vertices. 

{\bf Step 3}. We then select appropiate small vertex subsets 
$A_r$ and $B_r$ from $I(n)$ that satisfy the following conditions: $A_r\subset D'_r(h)$, $B_r\subset V(I(n))\setminus D'_r(h)$, $|A_r|>|B_r|$, each vertex of $A_r\cup B_r$ is near some corner of $I(n)$, and $D_r(h):=(D'_r(h)\setminus A_r)\cup B_r$ is a dominating set of $I(n)$. In Figures~2, 3, 4, 5 and 6,  the vertices of $A_r$ are enclosed in red squares, while those of $B_r$ are enclosed in green squares. 
 
{\bf Step 4}. Finally, we confirm that $D_r(h)$ is the required dominating set by showing that $|D_r(h)|=d(n)$. 

We emphasize that the three drawings in each of Figures~2, 3, 4, 5 and 6, correspond to the three big corners of $I(n)$
 and illustrate the transformations above described.

\vskip 0.3cm
\noindent\textbf{Case 0}. Suppose that $n \equiv 0 \pmod 5$. From Corollary~\ref{coroD'}~(2) we know that 
$D'_4(f)$ is a dominating set of $I(n)$ with cardinality  $|D'_4(f)|=|f_n^{-1}(4)|-1$, because $X_4(f)=\{(1,2)\}$. 

Because $H^{0,0}(n)\cap g^{-1}(4)=\emptyset$,  then by Proposition~\ref{conteoh}, we have that

\[
|D'_4(f)|=-1+\sum_{i=1}^{n/5}\left|H^{5i-4,5i}(n)\cap f^{-1}(4)\right|=-1+\sum_{i=1}^{n/5}5i.
\]

Let $A_0:=\{(4,n-1), (n-1,n-1), (1,n-1), (1, n-3), (n-6, n-1), (n-4, n-1)\}$ and $B_0:=\{(3, n-1), (1, n-2), (n-5, n-1), (n-2, n-1)\}$.
A simple inspection of Figure~\ref{fcason0} reveals that $A_0$ and $B_0$ satisfy the conditions of Proposition~\ref{p:AB}. Then $D_4(f):=(D'_4(f)\setminus A_0)\cup B_0$ is a dominating set of $I(n)$ with cardinality 
$-2-1+\sum_{i=1}^{n/5}5i=\frac{1}{10}(n^2+5n-30)$, as desired. 

\begin{figure}[H]
\begin{center}
\includegraphics[scale=1.3]{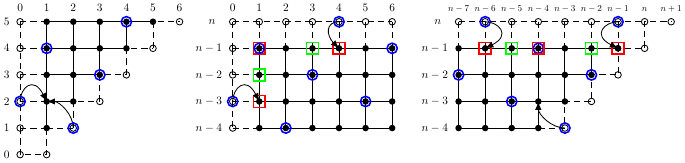}
\caption{The vertices of $A_0$ are enclosed in red squares, while those of $B_0$ are enclosed in green squares. We note that  $N[D'_4(f)]\subseteq N[(D'_4(f)\setminus A_0)\cup B_0]$.}
\label{fcason0}
\end{center}
\end{figure}

\vskip 0.3cm
\noindent\textbf{Case 1}. Suppose that $n \equiv 1 \pmod 5$. From Corollary~\ref{coroD'}~(2) we know that 
$D'_4(g)$ is a dominating set of $I(n)$ with cardinality  $|D'_4(g)|=|g_n^{-1}(4)|-1$, since $X_4(g)=\{(n-2,n-1)\}$. 

Because $H^{0,1}(n)\cap g^{-1}(4)=\emptyset$, then Proposition~\ref{conteoh} implies
\[
|D'_4(g)|=-1+\sum_{i=1}^{(n-1)/5}\left|H^{(5i+1)-4,5i+1}(n)\cap g^{-1}(4)\right|=-1+\sum_{i=1}^{(n-1)/5}(5i+1).
\]

Let $A_1:=\{(1,9),(1,7),(2,5),(2,n-1),(4,n-1)\}$ and $B_1:=\{(1,8),(2,6),(3,n-1)\}$.
A simple inspection of Figure~\ref{fcason1} reveals that $A_1$ and $B_1$ satisfy the conditions of Proposition~\ref{p:AB}. 
 Then $D_4(g):=(D'_4(g)\setminus A_1)\cup B_1$ is a dominating set of $I(n)$ with cardinality 
$-2-1+\sum_{i=1}^{(n-1)/5}(5i+1)=\frac{1}{10}(n^2+5n-36)$, as desired.

\begin{figure}[H]
\begin{center}
\includegraphics[scale=1.3]{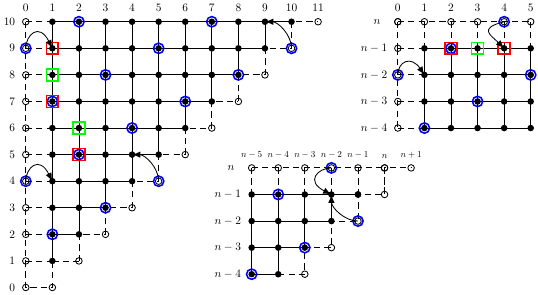}
\caption{The vertices of $A_1$ are enclosed in red squares, while those of $B_1$ are enclosed in green squares. We note that  $N[D'_4(g)]\subseteq N[(D'_4(g)\setminus A_1)\cup B_1]$.}
\label{fcason1}
\end{center}
\end{figure}

\vskip 0.3cm
\noindent\textbf{Case 2}. Suppose that $n \equiv 2 \pmod 5$. From Corollary~\ref{coroD'}~(2) we know that 
$D'_0(g)$ is a dominating set of $I(n)$ with cardinality  $|g_n^{-1}(0)|$, since $X_0(g)=\emptyset$. 

Since $H^{0,2}(n)\cap g^{-1}(0)=\{(0,0), (2,1)\}$, then Proposition~\ref{conteoh} implies

\[
|D'_0(g)|=2+\sum_{i=1}^{(n-2)/5}\left|H^{(5i+2)-4,5i+2}(n)\cap g^{-1}(0)\right|=2+\sum_{i=1}^{(n-2)/5}(5i+2).
\]

Let $A_2:=\{(1,1), (1, 3), (1, 5), (1, 8), (1, 13), (1,15), (2, 6), (2, 11), (3, 9),  (4, 7),  (6, 8), (2, n-1), \-(4, n-1), (n-3, n-1), (n-1, n-1)\}$ and $B_2:=\{ (1, 6),  (1, 14), (2, 4), (2, 8), (2, 12), \-(3, 6), (3, 10), (4, 8), (7, 9),  (3, n-1), (n-2, n-1)\}$.

A simple inspection of Figure~\ref{fcason2} reveals that $A_2$ and $B_2$ satisfy the conditions of Proposition~\ref{p:AB}. Then $D_4(0):=(D'_0(g)\setminus A_2)\cup B_2$ is a dominating set of $I(n)$ with cardinality 
$-4+2+\sum_{i=1}^{(n-2)/5}(5i+2)=\frac{1}{10}(n^2+5n-34)$, as desired. 

\begin{figure}[H]
\begin{center}
\includegraphics[scale=1.2]{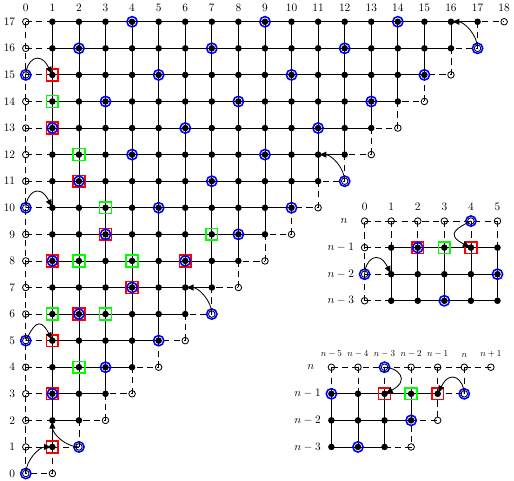}\\
\caption{The vertices of $A_2$ are enclosed in red squares, while those of $B_2$ are enclosed in green squares. We note that  $N[D'_0(g)]\subseteq N[(D'_0(g)\setminus A_2)\cup B_2]$.}
\label{fcason2}
\end{center}
\end{figure}

\vskip 0.3cm
\noindent\textbf{Case 3}. Suppose that $n \equiv 3 \pmod 5$. From Corollary~\ref{coroD'}~(2) we know that 
$D'_4(g)$ is a dominating set of $I(n)$ with cardinality  $|D'_4(g)|=|g_n^{-1}(4)|$, because $X_4(g)=\emptyset$. 

Since $H^{0,3}(n)\cap g^{-1}(4)=\{(1,2),(3,3)\}$, then Proposition~\ref{conteoh} implies
\[
|D'_4(g)|=2+\sum_{i=1}^{(n-3)/5}\left|H^{(5i+3)-4,5i+3}(n)\cap g^{-1}(4)\right|=2+\sum_{i=1}^{(n-3)/5}(5i+3).
\]

Let $A_3:=\{(1,7),(1,9), (2,5),(1,n-1),(1, n-4), (3,n-1), (n-1,n-1)\}$ and $B_3:=\{(1,8),(2,6),\-(2,n-1),(1,n-3)\}$.
A simple inspection of Figure~\ref{fcason3} reveals that $A_3$ and $B_3$ satisfy the conditions of Proposition~\ref{p:AB}. Then $D_4(g):=(D'_4(g)\setminus A_3)\cup B_3$ is a dominating set of $I(n)$ with cardinality 
$-3+2+\sum_{i=1}^{(n-3)/5}(5i+3)=\frac{1}{10}(n^2+5n-34)$, as desired. 

\begin{figure}[h]
\begin{center}
\includegraphics[scale=1.3]{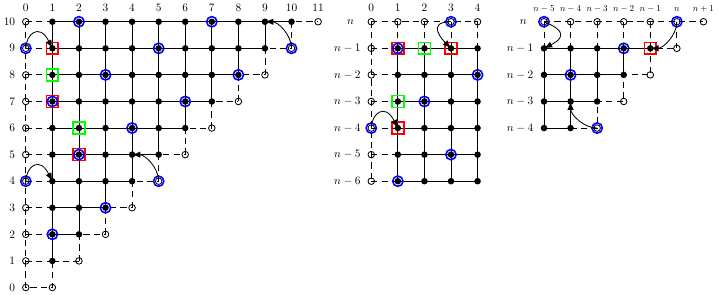}
\caption{The vertices of $A_3$ are enclosed in red squares, while those of $B_3$ are enclosed in green squares. We note that  $N[D'_4(g)]\subseteq N[(D'_4(g)\setminus A_3)\cup B_3]$.}
\label{fcason3}
\end{center}
\end{figure}

\vskip 0.3cm
\noindent\textbf{Case 4}. Suppose that $n \equiv 4 \pmod 5$. From Corollary~\ref{coroD'}~(2) we know that 
$D'_0(g)$ is a dominating set of $I(n)$ with cardinality  $|g_n^{-1}(0)|$,  because $X_0(g)=\emptyset$.

Since $H^{0,4}(n)\cap g^{-1}(0)=\{(0, 0), (2,1),(1,3),(3,4)\}$, then Proposition~\ref{conteoh} implies
\[
|D'_0(g)|=4+\sum_{i=1}^{(n-4)/5}\left|H^{(5i+4)-4,5i+4}(n)\cap g^{-1}(0)\right|=4+\sum_{i=1}^{(n-4)/5}(5i+4).
\]
Let $A_4:=\{(1, 1), (1,3), (1,5),(1,8),(1,13), (1,15), (2,6), (2,11),(3,9),(4,7), (6,8),\\(1,n-4), (1,n-1),(3,n-1),(n-6,n-1),(n-3,n-1)\}$ 
and $B_4:=\{(1,6), (1,14), (2,4), (2,8),\\ (2,12),(3,6), (3,10), (4,8), (7,9),(1,n-3), \-(2,n-1),(n-5,n-1)\}$.

A simple inspection of Figure~\ref{fcason4} reveals that $A_4$ and $B_4$ satisfy the conditions of Proposition~\ref{p:AB}. Then $D_0(g):=(D'_0(g)\setminus A_4)\cup B_4$ is a dominating set of $I(n)$ with cardinality 
$-4+4+\sum_{i=1}^{(n-4)/5}(5i+4)=\frac{1}{10}(n^2+5n-36)$, as desired. 

\begin{figure}[H]
\begin{center}
\includegraphics[scale=1.2]{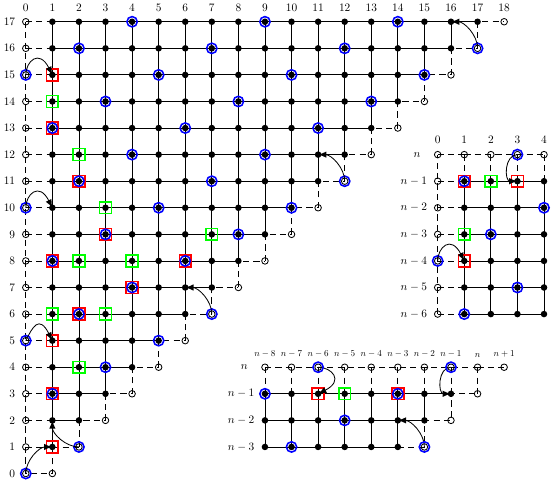}\\
\caption{The vertices of $A_4$ are enclosed in red squares, while those of $B_4$ are enclosed in green squares. We note that  $N[D'_0(g)]\subseteq N[(D'_0(g)\setminus A_4)\cup B_4]$.}
\label{fcason4} 
\end{center}
\end{figure}

\section*{Acknowledgments}
E. Acosta Troncoso is a Phd student of the Programa de Doctorado en Ciencias B\'asicas at Universidad Aut\'onoma de Zacatecas (UAZ) and has received a fellowship (Grant No. 4014068) from Secretar\'ia de Ciencia, Humanidades, Tecnolog\'ia e Innovación (SECIHTI), formerly Consejo Nacional de Humanidades, Ciencias y Tecnologías (CONAHCYT).

\end{document}